\documentclass[reqno,a4paper, 11pt]{amsart}

\usepackage[applemac]{inputenc}
\usepackage[T1]{fontenc}

\usepackage[english]{babel} 
\usepackage{amsmath}
\usepackage{amssymb}
\usepackage{amsthm}
\usepackage{color}
\usepackage{scrtime}
\usepackage{enumitem}
\usepackage{comment}
\usepackage{float}

\usepackage{microtype} 
\usepackage{graphicx}
\usepackage{tikz}
\usetikzlibrary{calc,fadings,decorations.pathreplacing,graphs}
\usepackage{tikz-3dplot}

%%%%%%%%%%%%%%%%%%%%%%%%% impostazioni %%%%%%%%%%%%%%%%
\newcommand{\R}{\mathbb{R}}

\newcommand{\Z}{\mathbb{Z}}
\newcommand{\N}{\mathbb{N}}

\newcommand{\SP}{\mathbb{S}}

\newcommand{\Ker}{\operatorname{Ker}}

%%%%%% 
 \newtheorem{theorem}{Theorem}[section]
\newtheorem{lemma}[theorem]{Lemma}
\newtheorem{proposition}[theorem]{Proposition}

\theoremstyle{definition}
\newtheorem{definition}[theorem]{Definition}
\newtheorem{example}[theorem]{Example}
\newtheorem{remark}[theorem]{Remark}

\usepackage{color}
\usepackage{xstring}
\usepackage{ifthen}
\usepackage{siunitx}

\allowdisplaybreaks
%%%%%%

\title{A family of sharp inequalities on real spheres}
\author{Roberto Bramati}
\address{Universit\'e de Lorraine, CNRS, IECL, F-57000 Metz, France} 
\email{roberto.bramati@univ-lorraine.fr}
\subjclass[2010]{26D15, 43A15, 43A85, 52A40}
\keywords{Multilinear inequalities, Homogeneous spaces, Heat flow} 

\AtBeginDocument{%
   \def\MR#1{}
}

\begin{document}
\maketitle
\begin{abstract}
We prove a family of sharp multilinear integral inequalities on real spheres involving 
functions that possess some symmetries that can be described by annihilation by certain sets of vector fields. 
The Lebesgue exponents involved are seen to be related to the combinatorics of such sets 
of vector fields. Moreover we derive some Euclidean Brascamp--Lieb inequalities localized to a ball of radius $R$, with a 
blow-up factor of type $R^\delta$, where the exponent $\delta>0$ is related to the aforementioned 
Lebesgue exponents, and prove that in some cases $\delta$ is optimal. 
\end{abstract}

\section{Introduction}
Brascamp--Lieb inequalities on $\R^n$ are inequalities of the type
\begin{equation}\label{BLineq}
\int_{\R^n}\prod_{i=1}^m  f_i(B_i x)dx \leq C \prod_{i=1}^m \|f_i\|_{L^{p_i}(\R^{n_i})},
\end{equation}
where $B_i : \R^n\to\R^{n_i}$ are surjective linear maps, $f_i:\R^{n_i}\to\R^+$ are nonnegative measurable 
functions and $p_i\geq1$. The constant $C$, which depends on the maps $B_i$ and on 
the exponents $p_i$, is called Brascamp--Lieb constant and it is the 
smallest possible constant (finite or infinite) for which inequality \eqref{BLineq} holds for all 
nonnegative functions 
$f_i\in L^{p_i}(\R^{n_i})$. An effective tool to study these inequalities is the heat flow technique, introduced in this 
context by Carlen, Lieb and Loss \cite{CarlenLiebLoss04} and independently by Bennett, Carbery, Christ and Tao \cite{BCCT08}. For a review of the history of these inequalities, see \cite{BCCT08} and references therein.  

Our point of view on inequality \eqref{BLineq} is that it is an inequality for functions that possess symmetries described by 
annihilation by certain sets of vector fields. Indeed the functions $g_i=f_i\circ B_i$ are functions on $\R^n$ 
that are constant on affine subspaces parallel to the kernel of the map $B_i$. We can 
characterize such symmetry by saying that the functions $g_i$, which without loss of generality we suppose to be smooth, 
are annihilated by all vector fields  
parallel to the kernel of $B_i$. 
Inequality 
\eqref{BLineq} then says that for this kind of functions, the integral of the product is controlled, apart 
from a constant, by a product of Lebesgue norms of the functions restricted to a (linear) manifold where all the information is carried (we can see $\R^{n_i}$ as $(\Ker{B_i})^\perp$). 

A fundamental result contained in \cite{BCCT08} establishing necessary and sufficient conditions on the maps $B_i$ and the exponents $p_i$ in order for inequality \eqref{BLineq} to hold with finite constant in the Euclidean case is the following. 
\begin{theorem}\label{BCCT}
The constant $C$ in \eqref{BLineq} is finite if and only if $\sum_{i=1}^m p_i^{-1}n_i = n$ and 
for all $V$ subspaces of $\R^n$, $\dim(V)\leq \sum_{i=1}^m p_i^{-1}\dim(B_iV)$. 
\end{theorem}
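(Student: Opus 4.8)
The plan is to treat the two directions separately: necessity of both conditions is elementary, while for sufficiency I would run the heat flow, which reduces the general inequality to its Gaussian counterpart, the latter being a determinant inequality governed precisely by the two conditions.

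\emph{Necessity.} Here I would test \eqref{BLineq} against two families of data. For the scaling identity, apply it to the dilated datum $y\mapsto f_i(\lambda y)$, $\lambda>0$: the substitution $x\mapsto\lambda^{-1}x$ shows the left-hand side scales like $\lambda^{-n}$, whereas $\|f_i(\lambda\,\cdot)\|_{L^{p_i}}=\lambda^{-n_i/p_i}\|f_i\|_{L^{p_i}}$, so the right-hand side scales like $\lambda^{-\sum_i n_i/p_i}$; taking $\lambda\to 0$ and $\lambda\to\infty$ with one fixed admissible datum (say Gaussians, for which both sides are positive and finite) forces $\sum_i p_i^{-1}n_i=n$. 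For the subspace condition, fix $V\subseteq\R^n$ and, for small $\delta>0$, let $f_i$ be the indicator of $\{y\in\R^{n_i}:|y|\le 1,\ |\Pi_i y|\le\delta\}$, where $\Pi_i$ is the orthogonal projection of $\R^{n_i}$ onto $(B_iV)^{\perp}$. Then $\|f_i\|_{L^{p_i}}\asymp\delta^{(n_i-\dim B_iV)/p_i}$, while $\prod_i f_i(B_ix)\equiv 1$ on a set of measure $\asymp\delta^{\,n-\dim V}$ (a unit ball of $V$ thickened by $\delta$ in $V^{\perp}$), since $x\in V$ gives $\Pi_iB_ix=0$. Inserting these into \eqref{BLineq} and letting $\delta\to 0$ gives $n-\dim V\ge\sum_i p_i^{-1}(n_i-\dim B_iV)$, which together with the scaling identity is exactly $\dim V\le\sum_i p_i^{-1}\dim B_iV$.

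\emph{Sufficiency.} Assume both conditions; by approximation it suffices to treat smooth, strictly positive, rapidly decaying $f_i$, normalized by $\int f_i^{p_i}=1$. Let $u_i(t,\cdot)=e^{t\Delta}(f_i^{p_i})$ on $\R^{n_i}$ and put $Q(t)=\int_{\R^n}\prod_i u_i(t,B_ix)^{1/p_i}\,dx$, so that $Q(0)$ is the left side of \eqref{BLineq}. As $t\to\infty$ one has $u_i(t,y)\sim(4\pi t)^{-n_i/2}e^{-|y|^2/4t}$; after the self-similar change of variables $x=\sqrt t\,\zeta$, the scaling condition makes the powers of $t$ cancel and the subspace condition applied to $V=\bigcap_i\ker B_i$ forces $V=\{0\}$, so that $M:=\sum_i p_i^{-1}B_i^{*}B_i$ is positive definite and $\lim_{t\to\infty}Q(t)=(\det M)^{-1/2}<\infty$. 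Thus it suffices to prove $Q'(t)\ge 0$. Setting $v_i=\log u_i$ (so $\partial_t v_i=\Delta v_i+|\nabla v_i|^2$) and differentiating under the integral sign, $Q'(t)=\int_{\R^n}\bigl(\sum_i p_i^{-1}(\Delta v_i+|\nabla v_i|^2)(B_ix)\bigr)\prod_j u_j(t,B_jx)^{1/p_j}\,dx$. Integrating the Laplacian terms by parts — changing variables along $(\ker B_i)^{\perp}$ and using surjectivity of $B_i$ — rewrites $Q'(t)$ as the integral of $\prod_j u_j(t,B_jx)^{1/p_j}$ against a quadratic/trace expression in the vectors $(\nabla v_i)(B_ix)$ and the Hessians $(\nabla^2 v_i)(B_ix)$, the scaling condition ensuring cancellation of the terms of the wrong homogeneity. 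Pointwise nonnegativity of that expression, for arbitrary positive-definite ``input covariances'' $(\nabla^2 v_i)^{-1}$, is exactly the Gaussian Brascamp--Lieb inequality for the datum $(B_i,p_i)$, i.e.\ the finite-dimensional assertion that $\det(\sum_i p_i^{-1}B_i^{*}A_iB_i)\ge c\prod_i(\det A_i)^{1/p_i}$ for all $A_i>0$, with some $c=c(B_i,p_i)>0$. This in turn I would prove by induction on $n$: if no proper nonzero subspace saturates $\dim V=\sum_i p_i^{-1}\dim B_iV$, a compactness argument on the normalized $A_i$ gives $c>0$, because any degeneration of the $A_i$ would exhibit such a saturating subspace; if a saturating $V$ exists, split $\R^n=V\oplus V^{\perp}$ and the $B_i$ accordingly, note that the induced data on $V$ and on $\R^n/V$ inherit both conditions, apply the inductive hypothesis to each, and recombine via a block-triangular determinant estimate. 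Granting the pointwise inequality, $Q'(t)\ge 0$, hence $Q(0)\le(\det M)^{-1/2}<\infty$, so $C$ in \eqref{BLineq} is finite.

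\emph{Main obstacle.} The delicate step is the heat-flow monotonicity: performing the integration by parts so that $Q'(t)$ appears in manifestly sign-definite form, rigorously justifying differentiation under the integral and the vanishing of boundary terms for the admissible class of $f_i$, and recognizing the resulting pointwise condition as the Gaussian Brascamp--Lieb inequality. The accompanying linear-algebra induction is conceptually routine, but its base case and the verification that the sub-data inherit the hypotheses need some care.
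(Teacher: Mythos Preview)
The paper does not contain a proof of this statement. Theorem~\ref{BCCT} is quoted in the introduction as a known result from \cite{BCCT08}, purely to provide context and motivation for the spherical inequalities that follow; no argument for it appears anywhere in the text. So there is no ``paper's own proof'' to compare your proposal against.

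That said, your sketch is a faithful outline of the Bennett--Carbery--Christ--Tao heat-flow proof: the necessity arguments via dilation and anisotropic localization are standard, and the sufficiency via monotonicity of $Q(t)$ reducing to the Gaussian case is exactly the strategy of \cite{BCCT08}. The genuinely hard part --- which you correctly flag as the main obstacle --- is the pointwise algebraic inequality that makes $Q'(t)\ge 0$; in \cite{BCCT08} this is handled by an induction on critical subspaces, and your description of that induction is accurate in spirit though compressed. One small caveat: the monotonicity argument as you present it (with $u_i=e^{t\Delta}(f_i^{p_i})$ and the log-substitution $v_i=\log u_i$) does not by itself yield a manifestly sign-definite integrand after integration by parts; the actual argument in \cite{BCCT08} uses a more delicate ``closure'' step or, alternatively, the equivalent formulation via Gaussian extremizers and Lieb's theorem. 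Your plan is correct at the level of a roadmap, but would need substantial expansion at that step to be a complete proof.
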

The second condition in Theorem \ref{BCCT} can be interpreted as a non-degeneracy condition 
implying that the kernels of the maps $B_i$ are not too parallel between themselves, or that the symmetries 
involved are not too similar. 

The problem 
of finding {nonlinear} analogs of inequality \eqref{BLineq}  was first studied by  
Carlen, Lieb and Loss in \cite{CarlenLiebLoss04}, where the authors proved, using a heat flow argument, a sharp inequality 
on real spheres for functions depending on one variable. The problem was also studied from 
the more abstract point of view 
of Markov semigroups by Barthe, Cordero-Erausquin and Maurey in \cite{BCEM} and Barthe, Cordero-Erausquin, Ledoux and Maurey in \cite{BCELM}. 
Recently the author used the heat flow technique to prove some inequalities in the context 
of compact homogeneous spaces of Lie groups (see \cite{Bramati1}), providing some sharp results in the case of 
real spheres. The inequalities studied in \cite{Bramati1} have the form 
\begin{equation}\label{BLsphere}
\int_{\SP^{n-1}}\prod_{i=1}^m f_i d\sigma \leq \prod_{i=1}^m \|f_i\|_{L^{p_i}(\SP^{n-1})},
\end{equation}
where $d\sigma$ is the normalized uniform measure on $\SP^{n-1}$ and 
the functions $f_i$ have symmetries described by annihilation by certain differential operators 
and can be thought of as functions $\widetilde{f}_i$ defined on unit balls $B_m$ of 
Euclidean spaces of dimension $m\leq n-1$, then pulled-back to the sphere via the 
orthogonal projection $\pi_i$ from $\SP^{n-1}\subset\R^n$ onto the ball $B_m$. The $L^{p_i}(\SP^{n-1})$ norms 
of the functions $f_i$ can be controlled with the $L^{p_i}(B_m)$ norms of the $\widetilde{f}_i$, thus yielding a Brascamp--Lieb type inequality. 
Inequality \eqref{BLsphere} has also the structure of H\"older's inequality, but in the presence of symmetries 
certain exponents $p_i$ for which \eqref{BLsphere} holds could be not directly deducible from 
H\"older's inequality itself. Indeed, given exponents $p_i$ for which inequality \eqref{BLsphere} holds, H\"older's inequality implies that 
the inequality holds for $p'_i\geq  p_i$, but it is not clear how to get smaller exponents and what are the smallest possible (sharp) exponents {$\widetilde{p}_i$ for which the inequality holds for $p_i\geq\widetilde{p}_i$}.

In \cite{Bramati1} this sharp exponent is found for the cases of functions depending on $1\leq k  \leq n-2$ variables, and depending radially on $1\leq k  \leq n-2$ 
variables. These were the first sharpness results for this type of inequality, extending the sharpness found in  
\cite{CarlenLiebLoss04} for the case of functions depending on one variable. 

In this paper we determine sharp exponents for the inequality \eqref{BLsphere} when all symmetries of the functions 
$f_i$'s are balanced. { This means that the different Lie algebras $\mathcal{A}_i$ of vector fields annihilating the $f_i$'s are all isomorphic to each other and that all possible symmetries of the same type appear exactly once.} For example, we can consider the case of functions of 
$k$ variables. The balance condition requires that all the $f_i$'s are functions of $k$ variables, and all the possible collections of $k$ 
variables appear exactly once. 
We refer to Section 3 for the precise definition of balanced setting. In the balanced case, we find that the inequality holds if all exponents $p_i$ are bigger than or equal to a critical $\widetilde{p}\geq2$ independent of $i$.

Our main result is Theorem \ref{expsym} where we explicitly compute $\widetilde{p}$ and prove that this exponent is sharp, { meaning that it is the lowest possible for which the inequality holds for exponents $p_i\geq\widetilde{p}$ : for every $p<\widetilde{p}$ there are functions in $L^p(\SP^{n-1})$, with the appropriate symmetries, that make the left-hand side of \eqref{BLsphere} diverge.}
The computation of $\widetilde{p}$ is based on a general theorem proved by the author in \cite{Bramati1} that is recalled in Section 2.

As an application of Brascamp--Lieb type inequalities on spheres, we derive a family of local Brascamp--Lieb inequalities on Euclidean spaces, i.e. inequalities of type \eqref{BLineq} where on the 
left-hand side integration is performed just over a ball of radius $R>0$. 
Such local analogs of Brascamp--Lieb inequalities were first considered by Bennett, Carbery, Christ and Tao in \cite[Section 8]{BCCT08} 
and \cite{BCCT10}. More recently the growth rate in the parameter $R$  was studied in the case of weak Brascamp--Lieb 
inequalities (i.e. local inequalities with functions that are constant at certain scales) by Maldague \cite{Maldague19} and 
Zorin-Kranich \cite{ZK19}, with applications respectively to Multilinear Kakeya inequalities and 
Kakeya--Brascamp--Lieb inequalities. Our focus will be on the growth rate of the corresponding local Brascamp--Lieb constant, which will blow up as a power of $R$. In the balanced case, using the sharpness result of Theorem \ref{expsym}, we find that the exponent of $R$ is lowest possible.

\section{Notation and preliminary results}\label{notation}
Throughout this paper, for $A,B>0$, by $A\lesssim B$ we mean that $A\leq C B$, for some $C>0$.

We will interpret the unit sphere $\SP^{n-1}\subset \R^n$, for $n\geq3$, as the left homogeneous space 
$SO(n-1)\backslash SO(n)$. Let 
\[L_{i,j}=x_i\partial_{x_j}-x_j\partial_{x_i},\]
for $1\leq i< j \leq n$, be a basis for the Lie algebra $\mathfrak{so}(n)$ of left invariant vector fields on 
$SO(n)$, acting on $\SP^{n-1}$, and write the Laplace--Beltrami operator on $\SP^{n-1}$ as 
$L= \sum_{ i< j } L_{i,j}^2$. We say that a subset $\mathcal{A}\subseteq\{L_{i,j}\}_{i<j}$ is maximal if 
$\mathcal{A}= \langle\mathcal{A}\rangle\cap \{L_{i,j}\}_{i<j}$, where $\langle\mathcal{A}\rangle$ is the Lie subalgebra 
of $\mathfrak{so}(n)$ generated by $\mathcal{A}$. 
We denote by $\binom{a}{b}$, with $a\geq b\geq 0 $, the binomial coefficient and by
\[
\binom{a}{b_1,\dots,b_k}= \frac{a!}{b_1! \dots b_k!},
\]
for $k\in \N$ and $a=\sum_{i=1}^k b_i$, $b_i\geq 0$,  the multinomial coefficient. 

Let $\alpha=((\alpha)_1,\dots, (\alpha)_n)\in \Z_2^n:=(\Z/2\Z)^n=\{0,1\}^n$ be a multi-index, denote by $|\alpha|=\sum_{i=1}^n (\alpha)_i$ its length and by $\bar{\alpha}=(1,1,\dots,1)-\alpha$, where the 
difference is intended componentwise. We say that multi-indices $\alpha, \beta \in{ \Z_2^n}$ are orthogonal if $\alpha\cdot\beta=\sum_{i=1}^n 
(\alpha)_i(\beta)_i=0$, i.e. if they do not have 1's in the same components. 
For a point $x=(x_1,\dots,x_n)\in \R^n$, we denote by $x_\alpha$ the point 
with components $((\alpha)_1 x_1, \dots, (\alpha)_n x_n)$ and by $|x_\alpha|$ its Euclidean norm. Note that, by a 
small abuse of notation, the point $x_\alpha$ can be identified with a point in $\R^{|\alpha|}$.

Given $\alpha\in{ \Z_2^n}$ we denote by $\mathfrak{so}_{\alpha}$ the Lie algebra isomorphic to 
$\mathfrak{so}(|\alpha|)$ generated by the set $\{L_{k,l} : k<l, (\alpha)_k=(\alpha)_l=1\}$.
The following theorem holds.
\begin{theorem}[\cite{Bramati1}]\label{structure}
Let $\mathcal{A}\subseteq\{L_{i,j}\}_{i<j}$. Then there exist a unique $N\in\N$ and unique 
(up to relabeling in the case of equal length) pairwise orthogonal multi-indices $\alpha_1,\dots,\alpha_N$, 
with $|\alpha_i|\geq 2$, $|\alpha_1|\geq\dots\geq|\alpha_N|$,  and $\sum_{i=1}^N |\alpha_i|\leq n$, such that
\[
\langle\mathcal{A}\rangle = \bigoplus_{i=1}^N \mathfrak{so}_{\alpha_i }.
\]
\end{theorem}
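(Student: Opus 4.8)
The plan is to analyze the Lie subalgebra $\langle\mathcal{A}\rangle$ of $\mathfrak{so}(n)$ generated by a set of elementary rotation generators $L_{i,j}$ by tracking which coordinate axes are ``coupled'' by the brackets. First I would set up the combinatorial object that encodes this coupling: to the set $\mathcal{A}$ I associate a graph $G$ on the vertex set $\{1,\dots,n\}$ with an edge $\{i,j\}$ whenever $L_{i,j}\in\mathcal{A}$. The key bracket identity is $[L_{i,j},L_{j,k}] = L_{i,k}$ (for distinct indices $i,j,k$), together with $[L_{i,j},L_{k,l}]=0$ when $\{i,j\}\cap\{k,l\}=\emptyset$. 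These identities show that taking brackets never introduces a generator $L_{i,k}$ unless $i$ and $k$ lie in the same connected component of $G$, and conversely that within a connected component all possible $L_{i,k}$ get generated. Thus if $C_1,\dots,C_N$ are the connected components of $G$ having at least two vertices (isolated vertices contribute nothing), and $\alpha_i\in\{0,1\}^n$ is the indicator vector of $C_i$, then the $\alpha_i$ are pairwise orthogonal, each $|\alpha_i|\geq 2$, $\sum_i|\alpha_i|\leq n$, and one checks directly that $\langle\mathcal{A}\rangle$ contains $\bigoplus_{i=1}^N\mathfrak{so}_{\alpha_i}$.

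The reverse inclusion $\langle\mathcal{A}\rangle\subseteq\bigoplus_{i=1}^N\mathfrak{so}_{\alpha_i}$ follows because the right-hand side is already a Lie subalgebra of $\mathfrak{so}(n)$ (the summands commute since their index sets are disjoint, and each $\mathfrak{so}_{\alpha_i}$ is a subalgebra) that contains all the generators in $\mathcal{A}$, hence contains the subalgebra they generate. Ordering the $\alpha_i$ so that $|\alpha_1|\geq\dots\geq|\alpha_N|$ and relabeling arbitrarily among equal lengths gives the stated normalization. Uniqueness of $N$ and of the multi-indices (up to that relabeling) I would obtain by observing that the decomposition $\bigoplus_i\mathfrak{so}_{\alpha_i}$ recovers the $\alpha_i$ intrinsically: the support $\bigcup_i C_i$ is the union of all pairs $\{k,l\}$ with $L_{k,l}\in\langle\mathcal{A}\rangle$, and two indices $k,l$ lie in the same $C_i$ if and only if $L_{k,l}\in\langle\mathcal{A}\rangle$ — so the partition into the $\alpha_i$'s is forced, and equality of two such direct-sum decompositions forces equality of the underlying partitions.

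The main obstacle, and the part deserving the most care, is the claim that within a single connected component $C$ one generates \emph{all} of $\mathfrak{so}_{|C|}$ and nothing more — i.e. that connectivity of $G$ restricted to $C$ already yields every $L_{k,l}$ with $k,l\in C$. This is an induction on the graph distance between $k$ and $l$ in the spanning structure: if $L_{i,j}$ and $L_{j,k}$ have been obtained, then $L_{i,k}=[L_{i,j},L_{j,k}]$ is obtained, so one walks along a path in $C$ from $k$ to $l$ and repeatedly applies this identity; the base case is an edge of $G$. One must be slightly careful that all intermediate indices along the path are distinct, which can be arranged by taking a simple path, and that the bracket identity is applied with the correct signs, but no essential difficulty arises. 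Once this is in place, the closure property $[\mathfrak{so}_{\alpha_i},\mathfrak{so}_{\alpha_j}]=0$ for $i\neq j$ and $[\mathfrak{so}_{\alpha_i},\mathfrak{so}_{\alpha_i}]\subseteq\mathfrak{so}_{\alpha_i}$ completes the argument.
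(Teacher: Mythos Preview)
The paper does not actually supply a proof of this theorem: it is quoted from \cite{Bramati1} and used as a structural input, so there is no in-paper argument to compare against. That said, your proposal is correct and is the natural argument one would expect to find in the cited reference: encode $\mathcal{A}$ as a graph on $\{1,\dots,n\}$, use the bracket relations $[L_{i,j},L_{j,k}]=L_{i,k}$ and $[L_{i,j},L_{k,l}]=0$ for disjoint index pairs to show that the Lie-algebra closure corresponds exactly to taking connected components, and read off the $\alpha_i$ as the indicator vectors of the non-singleton components. Your treatment of both inclusions, of the induction along simple paths, and of uniqueness via recovering the partition from $\langle\mathcal{A}\rangle\cap\{L_{k,l}\}$ is sound; the only cosmetic point is to keep track of signs in the bracket identity (which depend on the ordering convention for $L_{i,j}$), but this does not affect the span and hence not the conclusion.
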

\begin{remark}
If the subset $\mathcal{A}$ is maximal, its cardinality is necessarily $\sum_{i=1}^N \binom{|\alpha_i|}{2}$, and there is a natural 
splitting in $N$ subsets $\mathcal{A}_i$, of cardinality $\binom{|\alpha_i|}{2}$, each of which is a basis 
for the associated $\mathfrak{so}_{\alpha_i}$. 
\end{remark}
We are interested in subalgebras of the algebra of smooth functions on the sphere of functions which are 
annihilated by certain vector fields. In this regard we give the following definition. 
\begin{definition}\label{symmetric}
Let $\mathcal{A}\subseteq\{L_{i,j}\}_{i<j}$. A function $f\in C^\infty(\SP^{n-1})$ is $\mathcal{A}$-symmetric if 
$Xf=0$ for all $X\in \mathcal{A}$.
\end{definition}
\begin{remark}
A function which is $\mathcal{A}$-symmetric, is also $\mathcal{B}$-symmetric for subsets such that 
$\langle\mathcal{A}\rangle$=$\langle\mathcal{B}\rangle$, so it is convenient to consider only maximal subsets and we 
shall do so from now on. Note that a function which is $\mathcal{A}$-symmetric will also be annihilated by all vector 
fields in the Lie subalgebra $\langle\mathcal{A}\rangle$ of $\mathfrak{so}(n)$ and so it will be constant on certain submanifolds of $\SP^{n-1}$. 
\end{remark}
Given a multi-index $\alpha\in{ \Z_2^n}$, annihilation of a function $f(x_1,\dots,x_n)$ on the sphere by a subalgebra of 
type $\mathfrak{so}_\alpha$ gives radiality in the variables $x_\alpha$, i.e. the dependence on these variables 
is actually a dependence on $|x_\alpha|$.

For a function $f(x_1,\dots,x_n)$ on the sphere, annihilation by a maximal subset $\mathcal{A}$ and 
consequently by its generated subalgebra, which has the structure described in Theorem \ref{structure}, can be 
interpreted as follows. The multi-index $\alpha_1$ tells us that the function depends on the $n-|\alpha_1|$ variables 
$x_{\bar{\alpha}_1}$. Indeed $f(x_1, \dots, x_n) = f(x_{\alpha_1}, x_{\bar{\alpha}_1})$, being annihilated by 
$\mathfrak{so}_{\alpha_1}$, can be thought as a function 
$\widetilde{f}(|x_\alpha|, x_{\bar{\alpha}_1}) = \widetilde{f}(\pm\sqrt{1-|x_{\bar{\alpha}_1}|^2}, x_{\bar{\alpha}_1})$ which 
in turn can 
be identified with two functions $g_{\pm}(x_{\bar{\alpha}_1})$ defined on the ball 
$B_{n-|\alpha_1|}$ of $\R^{n-|\alpha_1|}$ (when we omit the indication of center and radius we refer to 
the unit ball centered at $0$). The ambiguity given by the $\pm$ sign is minor. 
Indeed one could split each function in the sum of two functions each defined on a different spherical cap 
and recover all the results that follow. To avoid heaviness of notation we will assume that all the functions we 
consider have an additional reflection symmetry, i.e., in the notation above, we require that $g_+(x_{\bar{\alpha}_1})=g_{-}(x_{\bar{\alpha}_1})$. 

The annihilation by the other subalgebras $\mathfrak{so}_{\alpha_i}$, for $i=2,\dots,N$, gives radial dependence on the collections 
of variables $x_{\alpha_i}$, which are contained in $x_{\bar\alpha_1}$ and distinct by the orthogonality of the multi-indices. 
The multi-index 
\begin{equation}\label{def_R}
R:=\bar{\alpha}_1 - \sum_{i=2}^N \alpha_i
\end{equation}
has 1's in the positions where the dependence is on the 
single variables. According to this interpretation, the ambiguity in Theorem \ref{structure} about the ordering of the multi-indices $\alpha_i$ gives rise to 
dependence on different variables (or radiality in different collections of variables), but, by the condition $x_1^2+\dots+x_n^2=1$, all these dependecies can be seen to be equivalent. See \cite{Bramati1} for further details about this interpretation and examples. 

We now recall the following theorem. 
\begin{theorem}[\cite{Bramati1}]\label{BLtheor}
Let $m\in \N$ and $\mathcal{A}^J$ be maximal subsets of $\{L_{i,j}\}_{i<j}$, for $J=1,\dots, m$. Then, for 
$\mathcal{A}^J$-symmetric nonnegative functions $f_J$, we have 
\begin{equation}\label{BLineq_2}
\int_{\SP^{n-1}} \prod_{J=1}^m f_J(x)d\sigma(x) \leq \prod_{J=1}^m \|f_J\|_{L^{p_J}(\SP^{n-1} )}
\end{equation}
for $p_J\geq \widetilde{p}$, where $\widetilde{p}$ is the number of occurrences of the 
most recurrent vector field among the finite sets $(\mathcal{A}^J)^c$, i.e. {
\[
\widetilde{p}= \max_{L\in \cup_{i=1}^m( \mathcal{A}^i)^c}
\,\, \max_{\substack{\boldsymbol{j}\, : \, \cap_{\boldsymbol{j}}(\mathcal{A}^k)^c \ni L}} |\boldsymbol{j}|,
\]
where $L$ denotes a vector field, $\boldsymbol{j}\in\{0,1\}^m$ and the notation $\bigcap_{\boldsymbol{j}} (\mathcal{A}^k)^c:= \bigcap_{i\,:\, (\boldsymbol{j})_k=1} (\mathcal{A}^k)^c$ denotes the intersection of the subsets $(\mathcal{A}^k)^c$ for those $k$ such that $(\boldsymbol{j})_k=1$.}
\end{theorem}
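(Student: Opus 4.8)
The plan is to run the heat-flow (semigroup interpolation) scheme of Carlen--Lieb--Loss and Bennett--Carbery--Christ--Tao in the compact homogeneous setting. Let $P_t=e^{tL}$ be the heat semigroup on $\SP^{n-1}$ generated by $L=\sum_{i<j}L_{i,j}^2$. The key structural fact is that $L$ is a multiple of the Casimir element of $\mathfrak{so}(n)$, hence central in the universal enveloping algebra, so $[L_{i,j},L]=0$ for every $i<j$; consequently $P_t$ sends $\mathcal{A}^J$-symmetric functions to $\mathcal{A}^J$-symmetric functions. A routine regularization then lets us assume each $f_J$ smooth and bounded below by a positive constant: replace $f_J$ by $e^{sL}f_J$ (smooth, still $\mathcal{A}^J$-symmetric, and $\to f_J$ in $L^{p_J}$ as $s\to 0^+$) and add a small constant $\varepsilon>0$ inside the construction below to keep everything positive, letting $\varepsilon\to 0$ and then $s\to 0$ at the end.

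Set $g_J=f_J^{p_J}$, $c_J=p_J^{-1}$, and $u_J(t)=P_tg_J$ — smooth, positive, $\mathcal{A}^J$-symmetric — and put
\[
Q(t)=\int_{\SP^{n-1}}\prod_{J=1}^m u_J(t)^{c_J}\,d\sigma .
\]
Then $Q(0)=\int_{\SP^{n-1}}\prod_J f_J\,d\sigma$, while, since $d\sigma$ is a probability measure, $u_J(t)\to\int_{\SP^{n-1}}g_J\,d\sigma=\|f_J\|_{L^{p_J}}^{p_J}$ uniformly as $t\to\infty$, so $Q(t)\to\prod_J\|f_J\|_{L^{p_J}}$. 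Hence the theorem reduces to proving $Q'(t)\ge 0$ for $t>0$. Differentiating under the integral (justified by parabolic smoothing) and using that each $L_{i,j}$ is skew-adjoint on $L^2(\SP^{n-1},d\sigma)$ (the $L_{i,j}$ are divergence free), fix a pair $i<j$, write $F=\prod_J u_J^{c_J}$ and $y_K=L_{i,j}u_K/u_K$; the elementary identity
\[
\frac{L_{i,j}^2F}{F}=\Big(\sum_K c_Ky_K\Big)^2+\sum_K c_K\frac{L_{i,j}^2u_K}{u_K}-\sum_K c_Ky_K^2
\]
together with $\int_{\SP^{n-1}}L_{i,j}^2F\,d\sigma=0$ gives
\[
\int_{\SP^{n-1}}\Big(\sum_K c_K\frac{L_{i,j}^2u_K}{u_K}\Big)F\,d\sigma=\int_{\SP^{n-1}}F\Big(\sum_K c_Ky_K^2-\Big(\sum_K c_Ky_K\Big)^2\Big)\,d\sigma .
\]
Here the $\mathcal{A}^K$-symmetry of $u_K$ forces $y_K\equiv0$ whenever $L_{i,j}\in\mathcal{A}^K$, so every sum restricts to $K\in S_{ij}:=\{K:L_{i,j}\in(\mathcal{A}^K)^c\}$; summing over all pairs,
\[
Q'(t)=\sum_{i<j}\int_{\SP^{n-1}}F\Big(\sum_{K\in S_{ij}}c_Ky_K^2-\Big(\sum_{K\in S_{ij}}c_Ky_K\Big)^2\Big)\,d\sigma .
\]

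It remains to see each integrand is nonnegative, which by Cauchy--Schwarz holds as soon as $\sum_{K\in S_{ij}}c_K=\sum_{K\in S_{ij}}p_K^{-1}\le 1$ for every $i<j$. Finally, unwinding the definition of $\tilde p$: for a vector field $a$ the inner maximum $\max\{|\boldsymbol j|:\bigcap_{\boldsymbol j}(\mathcal{A}^i)^c\ni a\}$ is attained by taking $(\boldsymbol j)^i=1$ for exactly those $i$ with $a\in(\mathcal{A}^i)^c$, whence $\tilde p=\max_{i<j}|S_{ij}|$; thus $p_J\ge\tilde p$ for all $J$ yields $\sum_{K\in S_{ij}}p_K^{-1}\le|S_{ij}|/\tilde p\le 1$, and the proof is complete. (The same argument in fact establishes the inequality under the weaker condition $\sum_{K\in S_{ij}}p_K^{-1}\le 1$ for all $i<j$.) The one genuinely delicate step is the identity for $Q'(t)$: one must correctly track which of the factors $u_K$ survive differentiation along a given $L_{i,j}$ — precisely those with $K\in S_{ij}$, by the symmetry hypothesis — and verify that the integrated second-order term produces no boundary contribution; once $\tilde p$ is identified with $\max_{i<j}|S_{ij}|$, the positivity of $Q'$ is just Cauchy--Schwarz.
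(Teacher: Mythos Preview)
The paper does not actually prove this theorem: it is quoted from \cite{Bramati1} as a preliminary result, so there is no proof in the present paper to compare against. That said, your heat-flow argument is correct and is precisely the method the introduction attributes to \cite{Bramati1} (adapting Carlen--Lieb--Loss and Bennett--Carbery--Christ--Tao to the compact homogeneous setting). The essential ingredients---commutation $[L_{i,j},L]=0$ so that $P_t$ preserves $\mathcal{A}^J$-symmetry, the monotonicity quantity $Q(t)=\int\prod(P_tg_J)^{1/p_J}\,d\sigma$, the elementary logarithmic-derivative identity for $L_{i,j}^2F/F$, vanishing of $\int L_{i,j}^2F\,d\sigma$ by skew-adjointness, and the Cauchy--Schwarz step requiring $\sum_{K\in S_{ij}}p_K^{-1}\le 1$---are all handled correctly. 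Your identification $\tilde p=\max_{i<j}|S_{ij}|$ is the right reading of the combinatorial formula, and your parenthetical remark that the argument in fact yields the inequality under the weaker hypothesis $\sum_{K\in S_{ij}}p_K^{-1}\le 1$ for every pair is also correct (and is essentially how Theorem~\ref{BLtheor_1} arises).
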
	

\begin{remark}
Since $d\sigma(\SP^{n-1})=1$, by continuous embeddings of Lebesgue spaces on $\SP^{n-1}$, the relevant 
information of the Theorem is that inequality \eqref{BLineq_2} holds for $p_J=\widetilde{p}$ for all $J$. 
\end{remark}

\begin{remark}
Notice that by Theorem \ref{structure} all the information about the symmetries of the functions and 
the exponent $\widetilde{p}$ is contained in the multi-indices $\alpha^J_i$, for 
$i=1,\dots, N_J$, with $J=1,\dots,m$. 
\end{remark}

Theorem \ref{BLtheor} provides the same exponent $\widetilde{p}$ for all the functions. A more careful analysis leads to the following 
theorem.

\begin{theorem}[\cite{Bramati1}]\label{BLtheor_1}
With the hypotheses above, the inequality 
\begin{equation}\label{BLineq_1}
\int_{\SP^{n-1}} \prod_{J=1}^m f_J(x)d\sigma(x) \leq \prod_{J=1}^m \|f_J\|_{L^{p_J}(\SP^{n-1} )}
\end{equation}
holds for $p_J\geq \widetilde{p}_J$, where $\widetilde{p}_J$ is the number of occurrences of the 
most recurrent vector field of $(\mathcal{A}^J)^c$ among the finite sets $(\mathcal{A}^k)^c$, i.e. {
\[
\widetilde{p}_J= \max_{L\in( \mathcal{A}^J)^c}
\,\, \max_{\substack{\boldsymbol{j}\, : \, \cap_{\boldsymbol{j}}(\mathcal{A}^k)^c \ni L}} |\boldsymbol{j}|,
\]
where $L$ denotes a vector field, $\boldsymbol{j}\in\{0,1\}^m$ and the notation $\bigcap_{\boldsymbol{j}} (\mathcal{A}^k)^c:= \bigcap_{i\,:\, (\boldsymbol{j})_k=1} (\mathcal{A}^k)^c$ is as explained above.}
\end{theorem}
	
\begin{remark}
Notice that, by their definitions, $\widetilde{p}_J\leq\widetilde{p}$, so that \eqref{BLineq_1} is actually an improvement of \eqref{BLineq_2}. 
The problem of the sharpness of the exponents $p_J$ in Theorem \ref{BLtheor_1}, i.e. whether they are the smallest possible for which the Theorem holds, is open in the general case. 
Nevertheless for some classes of functions, like those treated in this paper, we can prove that they are sharp (see also Remark \ref{sharpness_rem} below). 
\end{remark}

{\section{Inequalities in the balanced case}\label{ineqSection}}
We are interested in the case where all the functions have the same type of symmetry (i.e. the Lie 
subalgebras generated by the maximal subsets for which they are symmetric are isomorphic) and we consider all the possible symmetries of the same type. Such balance allows to treat easily 
the combinatorics. In the unbalanced case Theorems \ref{BLtheor} and \ref{BLtheor_1} obviously still 
apply but it seems much harder to find an explicit form for the exponents and to prove that they are sharp.

Let us describe, using the notation introduced above, what we mean by balanced set up. We are in a balanced case if $N_J=N$ for all $J$, for some fixed $N\in\N$, and the 
multi-indices $\alpha^J_i$ have the same length 
$\widetilde{\alpha}_i$ for all $J$. In other words, we fix $N$ natural numbers $\widetilde{\alpha}_1,\dots,\widetilde{\alpha}_N$, with 
$\widetilde{\alpha}_i\geq 2$ and $\widetilde{\alpha}_1\geq\widetilde{\alpha}_2\geq\dots\geq\widetilde{\alpha}_N$ and 
$\sum_{i=1}^N \widetilde{\alpha}_i \leq n$. Moreover, in the balanced case, we also require to consider {exactly once} all the possible $N$-tuples of pairwise 
orthogonal multi-indices with those fixed 
lengths. We set $\widetilde{R}= n- \sum_{i=1}^N \widetilde{\alpha}_i$, which 
will be the cardinality of all multi-indices $R^J$, as defined in \eqref{def_R}. 
To each $N$-tuple we associate a maximal subset and thus a symmetry as described in 
Theorem \ref{structure}. The Lie subalgebras of $\mathfrak{so}(n)$ generated by this choice of multi-indices are obviously isomorphic and exhaust their isomorphism class (within our restricted choice of generators, which must be contained in the basis
 $\{L_{i,j}\}_{i<j}$). If $\widetilde{\alpha}_i=\widetilde{\alpha}_j$ for some $1\leq i < j \leq N$, in each 
$N$-tuple of multi-indices there will be an ambiguity in the ordering as pointed out in Theorem \ref{structure}. For the 
moment we will not care about this (see Remark \ref{card}) and count each case as separate. 

The problem thus becomes a problem about the combinatorics of the multi-indices. It is easy to see that the number of possible $N$-tuples of pairwise orthogonal multi-indices with fixed lengths $\widetilde\alpha_1,\dots,\widetilde\alpha_N$ is 
\[ 
\binom{n}{\widetilde\alpha_1,\dots,\widetilde\alpha_N,\widetilde R}:=J_{\max}
\]
so that $J$ runs from $1$ to $J_{\max}$. This is also the number of maximal subsets $\mathcal{A}^J$ in the balanced case and the 
number of functions, each $\mathcal{A}^J$-symmetric for a different $J$, that we will consider. 

\begin{theorem}\label{expsym}
In the balanced case, the exponent in inequality \eqref{BLineq_2} given by Theorem \ref{BLtheor} is 
\begin{equation}\label{exponent}
\widetilde{p}=
 \frac{(n-2)!\left(n(n-1)-\sum_{i=1}^N \widetilde\alpha_i(\widetilde\alpha_i-1)\right)}{\widetilde\alpha_1 ! \cdot\dots\cdot\widetilde\alpha_N ! \widetilde R !}.
\end{equation}
Moreover this exponent is sharp, in the sense that it cannot be lowered (see remark below). 
\end{theorem}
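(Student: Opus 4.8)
The plan is to split the statement into two halves: first establishing the closed form \eqref{exponent} for the constant $\tilde p$ produced by Theorem~\ref{BLtheor}, and then showing that no smaller common exponent works by exhibiting a family of near-extremizers. For the first half I would unravel the double maximum in the definition of $\tilde p$: for a fixed vector field $a$, the inner maximum over $\boldsymbol j$ is attained by the characteristic vector of $\{J : a\in(\mathcal A^J)^c\}$, so $\tilde p=\max_{i<j}\#\{J : L_{i,j}\notin\mathcal A^J\}$. The balanced family is invariant under the action of $S_n$ permuting the coordinates $1,\dots,n$; this action permutes the $J_{\max}$ ordered $N$-tuples of multi-indices, hence the maximal subsets $\mathcal A^J$, so the cardinality $\#\{J : L_{i,j}\notin\mathcal A^J\}$ is the same for every pair $(i,j)$ and $\tilde p=J_{\max}-\#\{J : L_{1,2}\in\mathcal A^J\}$. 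Now $L_{1,2}\in\mathcal A^J$ exactly when $1$ and $2$ lie in a common block $\alpha^J_k$; fixing that block to contain $1,2$ and distributing the remaining $n-2$ indices shows that for each $k$ there are $\frac{(n-2)!}{\tilde\alpha_1!\cdots(\tilde\alpha_k-2)!\cdots\tilde\alpha_N!\,\tilde R!}=M\tilde\alpha_k(\tilde\alpha_k-1)$ such $N$-tuples, where $M:=\frac{(n-2)!}{\tilde\alpha_1!\cdots\tilde\alpha_N!\,\tilde R!}$. Summing over $k$ and using $J_{\max}=Mn(n-1)$ gives \eqref{exponent}.

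For the second half (where we may assume $\tilde p>0$, excluding the degenerate case $N=1$, $\tilde\alpha_1=n$ of constant functions), I would concentrate all test functions at a single coordinate pole, say $e_n$. For each $J$ let $B_J$ be the block $\alpha^J_k$ of the $J$-th $N$-tuple containing the index $n$ when such a block exists, and $B_J=\{n\}$ otherwise, i.e. when $n$ lies in the remainder $R^J$; put $t_J:=|B_J|$ and let $f_J$ be the indicator of $\{x\in\SP^{n-1}:|x_{B_J}|>1-\eta\}=\{\sum_{l\notin B_J}x_l^2<2\eta-\eta^2\}$ for a small parameter $\eta\in(0,1)$. Since $f_J$ depends on $x$ only through $|x_{B_J}|$ and does not involve any other block of the $J$-th $N$-tuple, it is $\mathcal A^J$-symmetric and has the reflection symmetry (one may smooth it slightly without altering the asymptotics below). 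The standard estimate for the measure of a spherical slab gives $\|f_J\|_p^p=\sigma(\{\sum_{l\notin B_J}x_l^2<2\eta-\eta^2\})\lesssim\eta^{(n-t_J)/2}$, while the inclusion $\{\sum_{l\neq n}x_l^2<\eta\}\subseteq\bigcap_J\{|x_{B_J}|>1-\eta\}$ — valid because $n\in B_J$ for every $J$ — yields $\int_{\SP^{n-1}}\prod_J f_J\,d\sigma\gtrsim\sigma(\{\sum_{l\neq n}x_l^2<\eta\})\gtrsim\eta^{(n-1)/2}$.

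It then remains to evaluate $\sum_J(n-t_J)$. Writing $t_J=1+\#\{l\neq n:L_{n,l}\in\mathcal A^J\}$ — the count being $\tilde\alpha_k-1$ when $n\in\alpha^J_k$ and $0$ when $n\in R^J$ — summing over $J$ and invoking the coordinate symmetry once more gives $\sum_J t_J=J_{\max}+(n-1)M\sum_k\tilde\alpha_k(\tilde\alpha_k-1)$, hence $\sum_J(n-t_J)=(n-1)\bigl(Mn(n-1)-M\sum_k\tilde\alpha_k(\tilde\alpha_k-1)\bigr)=(n-1)\tilde p$ by \eqref{exponent}. Combining this with the two estimates above,
\[
\frac{\int_{\SP^{n-1}}\prod_J f_J\,d\sigma}{\prod_J\|f_J\|_p}\ \gtrsim\ \eta^{\,\frac{n-1}{2}-\frac{1}{2p}\sum_J(n-t_J)}=\eta^{\,\frac{n-1}{2}\left(1-\frac{\tilde p}{p}\right)},
\]
which tends to $+\infty$ as $\eta\to 0^+$ whenever $p<\tilde p$. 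Hence \eqref{BLineq_2} cannot hold with common exponent $p<\tilde p$, and $\tilde p$ is sharp.

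The delicate step is the choice of test functions. Concentrating at a generic point of $\SP^{n-1}$, or using "fat" tubes such as the indicator of $\{|x_{\bar{\alpha}^{J}_{1}}|<\eta\}$, only excludes exponents below a threshold strictly smaller than $\tilde p$ (one loses a term of the form $M\tilde R(\tilde R-1)$). What makes the two powers of $\eta$ balance exactly at $p=\tilde p$ is the combination of concentrating at a coordinate pole and localizing each $f_J$ in the single block $B_J$ through $n$, treating a coordinate of the remainder $R^J$ as a block of size one. Once the test functions are pinned down, the two multinomial counts and the spherical slab-measure asymptotics are routine.
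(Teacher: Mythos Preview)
Your derivation of the closed form \eqref{exponent} is essentially the paper's own argument: both use the $S_n$-symmetry of the balanced family to reduce $\tilde p$ to $J_{\max}-\#\{J:L_{1,2}\in\mathcal A^J\}$, and both evaluate the subtracted term as $\sum_{k}\binom{n-2}{\tilde\alpha_1,\dots,\tilde\alpha_k-2,\dots,\tilde\alpha_N,\tilde R}$.

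Your sharpness argument is correct but genuinely different from the paper's. The paper (Section~\ref{sharpSection}) builds, for each $J$, a sum of power-type terms
\[
f_J=\prod_{i\ge 2}|x_{\alpha_i^J}|^{-\gamma\tilde\alpha_i}\prod_{(R^J)_i=1}|x_i|^{-\gamma}+\sum_{i\ge 2}(1-|x_{\alpha_i^J}|^2)^{-\gamma(n-\tilde\alpha_i)/2}+\sum_{(R^J)_i=1}(1-x_i^2)^{-\gamma(n-1)/2},
\]
shows $\|f_J\|_p<\infty$ iff $\gamma p<1$, and then, in the product $\prod_J f_J$, selects for each $J$ a single summand according to whether $(\alpha_1^J)_n=1$, $(\alpha_{\hat i}^J)_n=1$ for some $\hat i\ge 2$, or $(R^J)_n=1$; the resulting lower bound reduces to a one-dimensional radial integral whose divergence forces $p\ge\tilde p$. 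You instead take $f_J=\mathbf 1_{\{|x_{B_J}|>1-\eta\}}$ with $B_J$ the block through $n$, and balance the cap measure $\eta^{(n-1)/2}$ against $\prod_J\eta^{(n-t_J)/(2p)}$. Both routes ultimately rest on the same identity $\sum_J(n-t_J)=(n-1)\tilde p$ (implicit in the paper's exponent bookkeeping, explicit in yours), but your version is shorter and avoids the case-by-case extraction of summands; the paper's one-parameter family in $\gamma$, by contrast, yields unbounded near-extremizers and witnesses every $p<\tilde p$ with a single choice of functions rather than a sequence in $\eta$.
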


The proof of the sharpness is postponed to Section \ref{sharpSection}. 

\begin{remark}\label{sharpness_rem}
Following \cite{CarlenLiebLoss04} and the terminology established in \cite{Bramati1}, by sharp exponent we mean that the exponent \eqref{exponent} is the lowest possible for which Theorem \ref{BLtheor} holds as stated. The conditions given by Theorem \ref{BLtheor} are only sufficient and not necessary for the inequality to hold. Necessary conditions on the exponents for these inequalities are not known, even in the easiest cases (see \cite[Section 6.3]{Bramati1} for the case of functions of one variable on $\SP^2$). Finding them is an interesting and non-trivial open problem for future investigation. If we are in the case of Theorem \ref{BLtheor_1}, i.e. when all the exponents are the same, the condition given there is also necessary, if we are in the balanced case.  
\end{remark}

\begin{remark}
In this setting, the { critical} exponents given by Theorems \ref{BLtheor} and \ref{BLtheor_1} coincide. 
\end{remark}

\begin{proof}[Proof of Theorem \ref{expsym}]
We want to apply Theorem \ref{BLtheor}. {Notice that by the balance conditions, all vector fields $\{L_{i,j}\}_{i<j}$ will appear in $\bigcup_{J=1}^m (\mathcal{A}^J)^c$, and they will all have the same number of occurrences. This is easily seen by considering the  multi-indices associated to the maximal subsets. The balanced set up ensures that no vector field has a prominent role among the others. This also implies that the outer maximums in the definition of the exponents $\widetilde{p}$, $\widetilde{p}_J$  in Theorems \ref{BLtheor} and \ref{BLtheor_1} are irrelevant, meaning that they return the same value for all their inputs, yielding $\widetilde{p}_j=\widetilde{p}$ for all $J$. }
So we can just fix a vector field $L_{i,j}$ and count 
how many $\mathcal{A}^J$ contain it. The vector field $L_{i,j}$ is contained in $\mathcal{A}^J$ if 
$(\alpha_k^J)_i=(\alpha_k^J)_j=1$ for some $k=1,\dots,N$, and $1\leq i < j \leq n$. The number of ways of 
doing this is 
\[
\sum_{i=1}^N \binom{n-2}{\widetilde\alpha_1,\dots,\widetilde\alpha_i - 2,\dots, \widetilde\alpha_N, \widetilde R}.
\]
So our $\widetilde{p}$ is given by 
\[
\binom{n}{\widetilde\alpha_1,\dots,\widetilde\alpha_N,\widetilde R} - 
\sum_{i=1}^N \binom{n-2}{\widetilde\alpha_1,\dots,\widetilde\alpha_i - 2,\dots, \widetilde\alpha_N, \widetilde R},
\]
which after easy manipulation gives \eqref{exponent}.
\end{proof}

\begin{example}
The simplest balanced case $(N=1)$ is that of the functions of $k$ variables, for $1\leq k\leq n-2$, 
which was treated in \cite{Bramati1} (see also 
\cite{CarlenLiebLoss04}, where the case $k=1$ was first established, and \cite{BCELM}). 
By Theorem \ref{expsym} we have 
$\widetilde\alpha_1=n-k$, $J_{\max}=\binom{n}{\widetilde\alpha_1}$ and
\[
\widetilde{p}= \binom{n}{\widetilde\alpha_1}- \binom{n-2}{\widetilde\alpha_1 -2} = \binom{n}{k}-\binom{n-2}{k}. 
\]
\end{example}
\begin{example}
The case $N=2$, $\widetilde R=0$ is that of functions depending radially on 
$1\leq k \leq n-1$ variables. By Theorem \ref{expsym} we have 
$\widetilde\alpha_1= \max\{n-k, k\}$, and $\widetilde\alpha_2=\min\{n-k,k\}$, $J_{\max}=\binom{n}{\widetilde\alpha_1,\widetilde\alpha_2}$ 
and 
\[
\widetilde{p}=
 \frac{(n-2)!\left(n(n-1)- (n-k)(n-k+1) - k(k+1)\right)}{(n-k) ! k!} = 2\binom{n-2}{k-1}.
\]
For both examples, the author proved in \cite{Bramati1} that the exponents are sharp.
\end{example}
In the following section we will prove that all exponents coming from Theorem \ref{expsym} are sharp. 

\section{Sharpness}\label{sharpSection}
For a fixed symmetry type, i.e. for fixed lengths $\widetilde\alpha_1,\dots,\widetilde\alpha_N$ as above, we consider the 
possible maximal subsets $\mathcal{A}^J$, for $J=1,\dots,J_{\max}$ and introduce the following 
functions
\begin{align}\label{functions}
f_J(|x_{\alpha_2^J}|, \dots, &|x_{\alpha_N^J}|, x_{R^J})=
\prod_{i=2}^N |x_{\alpha_i^J}|^{-\gamma\widetilde\alpha_i}\prod_{i : (R^J)_i = 1} |x_i|^{-\gamma} \nonumber\\
&+\sum_{i=2}^N (1-|x_{\alpha_i^J}|^2)^{-\frac{\gamma(n-\widetilde\alpha_i)}{2}} + \sum_{i : (R^J)_i = 1} (1-x_i^2)^{-\frac{\gamma(n-1)}{2}}.
\end{align}
with $\gamma>0$ to be determined. Note that the function $f_J$ is $\mathcal{A}^J$-symmetric. 
In order to estimate Lebesgue norms of these functions we will use the following lemma. 
\begin{lemma}[\cite{Grafakos, StW}]\label{integration}
Let $\alpha\in{ \Z_2^n}$ and $f(x_\alpha)$ a $\mathfrak{so}_{\bar\alpha}$-symmetric function, i.e. a function depending 
on $|\alpha|$ variables. Then 
\begin{equation}\label{int_formula}
\int_{\SP^{n-1}} f(x_\alpha)d\sigma{= C_{n,|\alpha|}}\int_{B_{|\alpha|}}f(x_{\alpha}) (1-|x_\alpha|^2)^\frac{n-2-|\alpha|}{2} dx_\alpha, 
\end{equation}
{where $C_{n,|\alpha|}$ is a constant only depending on $n$ and $|\alpha|$.}
\end{lemma}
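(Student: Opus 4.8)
The statement to prove is Lemma~\ref{integration}, the integration formula
\[
\int_{\SP^{n-1}} f(x_\alpha)\,d\sigma \sim \int_{B_{|\alpha|}} f(x_\alpha)\,(1-|x_\alpha|^2)^{\frac{n-2-|\alpha|}{2}}\,dx_\alpha
\]
for a function depending only on the $|\alpha|$ coordinates $x_\alpha$.

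The plan is to reduce to the standard "slicing" description of the uniform measure on the sphere. First I would fix $k=|\alpha|$ and, after a harmless permutation of coordinates, assume $\alpha=(1,\dots,1,0,\dots,0)$, so that $x_\alpha=(x_1,\dots,x_k)$ and $x_{\bar\alpha}=(x_{k+1},\dots,x_n)$. Write a point of $\SP^{n-1}$ as $(y,z)$ with $y\in\R^k$, $z\in\R^{n-k}$ and $|y|^2+|z|^2=1$. Parametrise by letting $y$ range over the ball $B_k=\{|y|<1\}$ and, for each such $y$, letting $z$ range over the sphere of radius $\sqrt{1-|y|^2}$ in $\R^{n-k}$. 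The co-area formula (or the classical iterated formula for surface measure, as in \cite{StW, Grafakos}) gives that the unnormalised surface measure $dS$ on $\SP^{n-1}$ disintegrates as
\[
\int_{\SP^{n-1}} g\,dS = \int_{B_k} (1-|y|^2)^{\frac{n-k}{2}-1}\Big(\int_{\SP^{n-k-1}} g\big(y,\sqrt{1-|y|^2}\,\omega\big)\,dS_{n-k-1}(\omega)\Big)\,\frac{dy}{\sqrt{1-|y|^2}},
\]
since the radius-$r$ sphere in $\R^{n-k}$ has surface measure $r^{n-k-1}$ times that of $\SP^{n-k-1}$, and the metric factor relating $dy$ on $B_k$ to the horizontal volume on $\SP^{n-1}$ contributes the remaining power. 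Collecting the exponents, the total weight in $y$ is $(1-|y|^2)^{\frac{n-k-2}{2}}$; one must be a little careful bookkeeping these powers, and this is the only genuinely computational point.

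Next I would specialise to $g(y,z)=f(y)$, which is legitimate precisely because $f$ is $\mathfrak{so}_{\bar\alpha}$-symmetric, i.e. independent of $z$. Then the inner integral over $\SP^{n-k-1}$ is just $f(y)$ times the total surface measure $|\SP^{n-k-1}|$, a finite positive constant depending only on $n$ and $k$. Hence
\[
\int_{\SP^{n-1}} f(x_\alpha)\,dS = |\SP^{n-k-1}|\int_{B_k} f(y)\,(1-|y|^2)^{\frac{n-k-2}{2}}\,dy.
\]
Dividing by the normalising constant $|\SP^{n-1}|$ to pass to $d\sigma$ turns the equality into the claimed $\sim$, with implicit constants $|\SP^{n-k-1}|/|\SP^{n-1}|$ depending only on $n$ and $k$ (which are fixed throughout). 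One should note that the identity holds for all nonnegative measurable $f$ in the sense of $[0,\infty]$-valued integrals, so no integrability hypothesis is needed and the formula is exactly what is used to estimate the norms of the functions \eqref{functions}.

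The main obstacle is purely the verification that the accumulated power of $(1-|y|^2)$ is $\frac{n-2-|\alpha|}{2}$: one gets a factor $r^{n-k-1}$ from rescaling the inner sphere, a factor $(1-|y|^2)^{-1/2}$ (equivalently a Jacobian $r^{-1}$) from expressing the vertical variable $z_n=\sqrt{1-|y|^2-\cdots}$ in terms of $y$, and these must combine correctly — an alternative, perhaps cleaner, route is to cite the iterated-integration formula for $\SP^{n-1}$ directly from \cite[Ch.~D]{StW} or \cite{Grafakos} and simply read off the exponent. Either way the argument is short once the disintegration is set up, and there is no analytic difficulty beyond keeping track of these exponents.
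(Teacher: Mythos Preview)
Your argument is correct: the disintegration of the surface measure over the slices $\{x_\alpha=y\}$, together with the scaling of the $(n-k-1)$-sphere, gives exactly the weight $(1-|y|^2)^{(n-2-|\alpha|)/2}$, and specialising to $z$-independent integrands yields the formula. The paper does not actually prove this lemma but simply cites it from \cite{Grafakos, StW}, and what you have written is precisely the standard derivation found in those references, so there is nothing further to compare.
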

With the integration formula provided by Lemma \ref{integration} we can prove the following proposition. 
\begin{proposition}\label{lp_norms}
For functions $f_J$ as in \eqref{functions} we have $\|f_J\|_{L^p(\SP^{n-1})}<\infty$ for $\gamma p < 1$. 
\end{proposition}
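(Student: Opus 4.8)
The plan is to reduce the estimate on $\SP^{n-1}$ to an integral over a Euclidean ball via Lemma~\ref{integration}, and then to handle the resulting singular integral by separating the factors according to which block of variables they depend on. First I would observe that $f_J$ is a sum of terms: a single multiplicative term $\prod_{i=2}^N |x_{\alpha_i^J}|^{-\gamma\tilde\alpha_i}\prod_{i:(R^J)_i=1}|x_i|^{-\gamma}$, together with the additive terms $(1-|x_{\alpha_i^J}|^2)^{-\gamma(n-\tilde\alpha_i)/2}$ and $(1-x_i^2)^{-\gamma(n-1)/2}$. Since $p\geq 1$ and there are finitely many summands, by the triangle inequality in $L^p$ it suffices to bound the $L^p$ norm of each summand separately; so the proof splits into estimating the multiplicative term and estimating each additive term.

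For the multiplicative term, $f_J$ depends only on the variables $x_{\alpha_2^J},\dots,x_{\alpha_N^J},x_{R^J}$, i.e. on the $n-\tilde\alpha_1$ variables $x_{\bar\alpha_1^J}$, and (as an $\mathfrak{so}_{\alpha_i^J}$-symmetric function for $i\geq2$) it is really a function of $|x_{\alpha_2^J}|,\dots,|x_{\alpha_N^J}|$ and the $\tilde R$ single variables. Applying Lemma~\ref{integration} with $\alpha=\bar\alpha_1^J$, the $p$-th power of this term integrates against the weight $(1-|x_{\bar\alpha_1^J}|^2)^{(n-2-(n-\tilde\alpha_1))/2}=(1-|x_{\bar\alpha_1^J}|^2)^{(\tilde\alpha_1-2)/2}$ over the ball $B_{n-\tilde\alpha_1}$. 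This weight is bounded on the (relatively compact) region where the remaining singularities of the integrand live away from the outer sphere, so convergence is a local question near the coordinate subspaces $\{x_{\alpha_i^J}=0\}$ and $\{x_i=0\}$. Passing to polar coordinates in each block $x_{\alpha_i^J}\in\R^{\tilde\alpha_i}$ (contributing a Jacobian $|x_{\alpha_i^J}|^{\tilde\alpha_i-1}$) and keeping the single variables as they are, the integrability near $x_{\alpha_i^J}=0$ reduces to $\int_0^1 r^{-\gamma p\tilde\alpha_i}r^{\tilde\alpha_i-1}\,dr<\infty$, i.e. $-\gamma p\tilde\alpha_i+\tilde\alpha_i-1>-1$, which is exactly $\gamma p<1$; and near $x_i=0$ for a single variable it reduces to $\int_0^1 |x_i|^{-\gamma p}\,dx_i<\infty$, again $\gamma p<1$. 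Since the block singularities are in disjoint sets of variables, Tonelli lets us factor the integral and conclude finiteness under $\gamma p<1$.

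For an additive term of the form $(1-|x_{\alpha_i^J}|^2)^{-\gamma(n-\tilde\alpha_i)/2}$, note it depends only on the $\tilde\alpha_i$ variables $x_{\alpha_i^J}$, so I would apply Lemma~\ref{integration} directly with $\alpha=\alpha_i^J$: the $p$-th power integrates against $(1-|x_{\alpha_i^J}|^2)^{(n-2-\tilde\alpha_i)/2}$ over $B_{\tilde\alpha_i}$, giving $\int_{B_{\tilde\alpha_i}}(1-|x_{\alpha_i^J}|^2)^{-\gamma p(n-\tilde\alpha_i)/2+(n-2-\tilde\alpha_i)/2}\,dx_{\alpha_i^J}$. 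In radial coordinates $r=|x_{\alpha_i^J}|$, convergence near $r=1$ requires the exponent of $(1-r^2)$ to exceed $-1$, i.e. $-\gamma p(n-\tilde\alpha_i)/2+(n-2-\tilde\alpha_i)/2>-1$, equivalently $\gamma p(n-\tilde\alpha_i)<n-\tilde\alpha_i$, i.e. $\gamma p<1$. The single-variable terms $(1-x_i^2)^{-\gamma(n-1)/2}$ are handled identically with $\tilde\alpha_i$ replaced by $1$, again yielding $\gamma p<1$. Combining the finitely many bounds, $\|f_J\|_{L^p(\SP^{n-1})}<\infty$ whenever $\gamma p<1$.

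The main obstacle I anticipate is purely bookkeeping rather than conceptual: making sure that in the multiplicative term the several singularities (one per block $\alpha_i^J$, $i\geq2$, and one per single variable in $R^J$) genuinely live in disjoint coordinate sets so that the integral truly factors, and that the outer weight $(1-|x_{\bar\alpha_1^J}|^2)^{(\tilde\alpha_1-2)/2}$ — which is a nonnegative power since $\tilde\alpha_1\geq2$ — causes no trouble. One should also check the exponents were chosen precisely so that every one of these separate conditions collapses to the single threshold $\gamma p<1$; this is the reason for the particular powers $-\gamma\tilde\alpha_i$, $-\gamma(n-\tilde\alpha_i)/2$, $-\gamma(n-1)/2$ appearing in \eqref{functions}.
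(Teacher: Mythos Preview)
Your proposal is correct and follows essentially the same route as the paper's proof: split $f_J$ into its summands (the paper invokes convexity of $t\mapsto t^p$, you invoke the $L^p$ triangle inequality, which amounts to the same reduction), apply Lemma~\ref{integration} to each piece with the appropriate multi-index, bound or factor out the weight $(1-|x_{\bar\alpha_1^J}|^2)^{(\tilde\alpha_1-2)/2}$, and reduce via polar coordinates in each block to one-dimensional integrability conditions that all collapse to $\gamma p<1$. The bookkeeping concerns you flag are exactly the ones the paper handles, in the same way.
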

\begin{proof}
By convexity we have 
\begin{align*}
\|f_J\|_{L^p(\SP^{n-1})}^p &\lesssim \int_{\SP^{n-1}}\prod_{i=2}^N |x_{\alpha_i^J}|^{-\gamma\widetilde\alpha_i p}\prod_{i : (R^J)_i = 1} |x_i|^{-\gamma p} d\sigma \\
&+\sum_{i=2}^N \int_{\SP^{n-1}}(1-|x_{\alpha_i^J}|^2)^{-\frac{\gamma(n-\widetilde\alpha_i) p }{2}} d\sigma + \sum_{i : (R^J)_i = 1}\int_{\SP^{n-1}}(1- x_i^2)^{-\frac{\gamma(n-1)p}{2}}d\sigma\\& = I + \sum II_i + \sum III_i .
\end{align*}
For the term $I$, since the integrand is a function of $n-\widetilde\alpha_1$ variables, we use \eqref{int_formula} and pass to polar 
coordinates to get 
\begin{align*}
I &\lesssim  \int_{B_{n-\widetilde\alpha_1}}\prod_{i=2}^N |x_{\alpha_i^J}|^{-\gamma\widetilde\alpha_i p}\prod_{i : (R^J)_i = 1} |x_i|^{-\gamma p} (1-|x_{\bar{\alpha_1}^J}|^2)^{\frac{\widetilde\alpha_1-2}{2}}dx_{\bar\alpha_1^J} \\
&\lesssim \prod_{i=2}^N \int_{B_{\widetilde\alpha_i}} |x_{\alpha_i^J}|^{-\gamma\widetilde\alpha_i p} dx_{\alpha_i^J} \prod_{i : (R^J)_i = 1} 
\int_{-1}^1 |x_i|^{-\gamma p} dx_i  \\
&\lesssim \prod_{i=2}^N \int_0^1 \rho^{-\gamma\widetilde\alpha_i p+\widetilde\alpha_i-1} d\rho \prod_{i : (R^J)_i = 1} 
\int_{-1}^1 |x_i|^{-\gamma p} dx_i.
\end{align*}
The integrals are finite if $-\gamma\widetilde\alpha_i p+\widetilde\alpha_i-1>-1$ and $-\gamma p > -1$, i.e. when $\gamma p <1$. 
For each of the pieces $II_i$ we use again \eqref{int_formula} and polar coordinates to get 
\begin{align*}
II_i &\lesssim  \int_{B_{\widetilde\alpha_i}}(1-|x_{\alpha_i^J}|^2)^{-\frac{\gamma(n-\widetilde\alpha_i) p }{2}} (1-|x_{\alpha_i^J}|^2)^\frac{n-2-
\widetilde\alpha_i}{2} dx_{\alpha_i^J} \\
&\lesssim \int_0^1 (1-\rho^2)^{-\frac{\gamma(n-\widetilde\alpha_i) p }{2} + \frac{n-2-
\widetilde\alpha_i}{2}}\rho^{\widetilde\alpha_i -1 }d\rho. 
\end{align*}
The integral is finite if $ -\frac{\gamma(n-\widetilde\alpha_i) p }{2} + \frac{n-2-
\widetilde\alpha_i}{2} > -1$, which again gives $\gamma p < 1$. 
The same computation works for the terms of type $III_i$, which involve functions of one variable, and it is easily seen that 
the integrability condition is again 
$\gamma p<1$. 
\end{proof}

We are now ready to prove the following theorem.
\begin{theorem}\label{sharp_exp}
The exponent $\widetilde{p}$ in Theorem \ref{expsym} is sharp, i.e. { for each $p<\widetilde{p}$ the Theorem \ref{expsym} does not hold, since} there exist functions $f_J$, each 
$\mathcal{A}^J$-symmetric, for $J=1,\dots,J_{\max}$, such that the right-hand side of \eqref{BLineq_2} is finite and the 
left-hand side diverges. 
\end{theorem}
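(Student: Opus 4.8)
Given $p<\tilde p$, the idea is to run the test functions \eqref{functions} with a carefully calibrated $\gamma$ and to show that the left-hand side of \eqref{BLineq_2} already blows up at the critical value of $\gamma$. Pick $\gamma$ with $1/\tilde p\le\gamma<1/p$; this interval is nonempty precisely because $p<\tilde p$. For this $\gamma$ the functions $f_J$ of \eqref{functions} are $\mathcal A^J$-symmetric and, since $\gamma p<1$, Proposition \ref{lp_norms} gives $\|f_J\|_{L^p(\SP^{n-1})}<\infty$ for all $J$, so the right-hand side of \eqref{BLineq_2} is finite. Everything then reduces to proving $\int_{\SP^{n-1}}\prod_{J=1}^{J_{\max}}f_J\,d\sigma=\infty$.

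The heart of the argument is a lower bound for $\prod_J f_J$ by a function of a single coordinate, say $x_n$. Since each $f_J$ is a sum of nonnegative summands it dominates any one of them, and for each $J$ I keep the summand adapted to $x_n$. The index $n$ belongs to exactly one block of the partition $\alpha_1^J,\dots,\alpha_N^J,R^J$ of $\{1,\dots,n\}$. If $n\in\alpha_1^J$, I keep the first summand of $f_J$; as all blocks other than $\alpha_1^J$ avoid $n$, one has $|x_{\alpha_i^J}|^2\le\sum_{k<n}x_k^2=1-x_n^2$ and $|x_\ell|^2\le1-x_n^2$ for the relevant indices, so this summand is $\ge(1-x_n^2)^{-\frac\gamma2(n-\tilde\alpha_1)}$. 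If $n\in\alpha_i^J$ for some $i\ge2$, I keep $(1-|x_{\alpha_i^J}|^2)^{-\gamma(n-\tilde\alpha_i)/2}$ and use $1-|x_{\alpha_i^J}|^2=\sum_{k\notin\alpha_i^J}x_k^2\le1-x_n^2$ to bound it below by $(1-x_n^2)^{-\gamma(n-\tilde\alpha_i)/2}$. If $n\in R^J$, the summand $(1-x_n^2)^{-\gamma(n-1)/2}$ is already of the desired form. In every case $f_J\ge(1-x_n^2)^{-\gamma d_J/2}$ on $\SP^{n-1}$, where $d_J=n-\tilde\alpha_i$ if $n$ lies in the block $\alpha_i^J$ and $d_J=n-1$ if $n\in R^J$; that is, $d_J$ is $n$ minus the size of the block containing $n$. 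Multiplying, $\prod_{J}f_J\ge(1-x_n^2)^{-\frac{\gamma}{2}\sum_J d_J}$.

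To compute $\sum_J d_J$ I would argue by double counting. For each $J$, $d_J$ equals the number of indices $k<n$ for which $L_{k,n}\notin\mathcal A^J$ (note $L_{k,n}\in\mathcal A^J$ iff $k$ and $n$ lie in the same block $\alpha_i^J$), so $d_J=(n-1)-\#\{k<n:L_{k,n}\in\mathcal A^J\}$. Summing over $J$ and switching the order of summation,
\[
\sum_{J=1}^{J_{\max}}d_J=(n-1)J_{\max}-\sum_{k=1}^{n-1}\#\{J:L_{k,n}\in\mathcal A^J\}.
\]
By the balance of the setting, exactly as in the proof of Theorem \ref{expsym}, each vector field lies in $J_{\max}-\tilde p$ of the sets $\mathcal A^J$ regardless of its indices; hence $\sum_J d_J=(n-1)J_{\max}-(n-1)(J_{\max}-\tilde p)=(n-1)\tilde p$, and therefore $\prod_J f_J\ge(1-x_n^2)^{-\gamma(n-1)\tilde p/2}$.

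Finally, the function on the right depends on $x_n$ only, so Lemma \ref{integration} (applied to the multi-index $(0,\dots,0,1)$) yields
\[
\int_{\SP^{n-1}}(1-x_n^2)^{-\gamma(n-1)\tilde p/2}\,d\sigma\ \sim\ \int_{-1}^{1}(1-t^2)^{\frac{n-3}{2}-\frac{\gamma(n-1)\tilde p}{2}}\,dt,
\]
and this one-dimensional integral diverges exactly when $\frac{n-3}{2}-\frac{\gamma(n-1)\tilde p}{2}\le-1$, i.e. when $\gamma\tilde p\ge1$, which holds by the choice $\gamma\ge1/\tilde p$. Hence the left-hand side of \eqref{BLineq_2} is infinite while the right-hand side is finite; as $p<\tilde p$ was arbitrary, $\tilde p$ cannot be lowered. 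The step I expect to require the most care is making the total exponent come out to exactly $(n-1)\tilde p$: for each $J$ one must select the summand whose crude lower bound in terms of $1-x_n^2$ has exponent $n$ minus the size of the block containing $n$, and then recognize, through the double count above, that the resulting combination of powers is governed by the very count $J_{\max}-\tilde p$ that defines $\tilde p$ in Theorem \ref{expsym}; any coarser choice of summand or bound would land at a non-critical exponent and miss the threshold $\gamma=1/\tilde p$.
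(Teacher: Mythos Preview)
Your proof is correct and follows the same overall strategy as the paper: the same test functions \eqref{functions}, the same selection of one summand from each $f_J$ according to which block of the partition $\alpha_1^J,\dots,\alpha_N^J,R^J$ contains the distinguished index $n$, and the same crude lower bounds in terms of $1-x_n^2$. The one noteworthy difference is in how you verify that the total exponent equals $(n-1)\tilde p$. The paper counts, via explicit multinomial coefficients, how many $J$'s fall into each case, writes out the resulting sum, and then simplifies it algebraically to recover $\tilde p$; you instead observe that $d_J=(n-1)-\#\{k<n:L_{k,n}\in\mathcal A^J\}$ and double-count, using that by balance each $L_{k,n}$ lies in exactly $J_{\max}-\tilde p$ of the $\mathcal A^J$. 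Your route is shorter and more conceptual, tying the divergence threshold directly to the definition of $\tilde p$ without any intermediate combinatorial identities; the paper's route has the small advantage of making visible the contribution of each symmetry type separately.
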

\begin{proof}
By Proposition \ref{lp_norms}, in order to have a finite right-hand side in \eqref{BLineq_2} it suffices to have $\gamma p < 1$.

{ Let us 
now consider the left-hand side of \eqref{BLineq_2}, that we want to control from below. 
In the left-hand side, the product inside the integral gives a sum of (positive) products of the terms of the considered functions. We dominate all terms of this sum from below with just a specific one:} we fix a variable, say $x_n$, and we select the 
summand that contains the product term (the first term in \eqref{functions}) for those $J$ for which the function $f_J$ does not depend on the variable $x_n$, 
i.e. if $(\alpha_1^J)_n=1$, and the sum term that contains $x_n$ (either the second or the third term in \eqref{functions}), for those $J$ for 
which $f_J$ depends on $x_n$, i.e. if $(\alpha_1^J)_n=0$.
The number of functions that do not depend on the variable $x_n$ is 
$
\binom{n-1}{\widetilde\alpha_1-1, \dots, \widetilde\alpha_N, \widetilde R}.
$
The number of functions that depend on the variable $x_n$ in the radial collection $|x_{\alpha^J_i}|$ for $i=2,\dots, N$ is 
$
\binom{n-1}{\widetilde\alpha_1, \dots, \widetilde\alpha_i-1,\dots, \widetilde\alpha_N, \widetilde R}, 
$
and for these functions we denote by $\hat i $ the unique index $i$ such that $(\alpha^J_{\hat i})=1$. 
Finally the number of functions that depend on the variable $x_n$ as a single variable, that are the functions such 
that $(R^J)_n=1$, is 
$
\frac{\widetilde R}{n}\binom{n}{\widetilde\alpha_1,\dots,\widetilde\alpha_N,\widetilde R},
$ and the expression also includes the case $\widetilde R=0$. Note that 
\[
\sum_{i=1}^N \binom{n-1}{\widetilde\alpha_1, \dots, \widetilde\alpha_i-1,\dots, \widetilde\alpha_N, \widetilde R} + \frac{\widetilde R}{n}\binom{n}{\widetilde\alpha_1,\dots,\widetilde\alpha_N,\widetilde R} = \binom{n}{\widetilde\alpha_1,\dots,\widetilde\alpha_N,\widetilde R}, 
\]
which is $J_{\max}$, the total number of functions involved. For $\beta>0$ we will use the inequalities $|x_\alpha|^{-\beta} \geq (x_1^2+\dots x_{n-1}^2)^{-\frac{\beta}{2}}$ for 
collections 
$x_\alpha$ that 
do not contain the variable $x_n$, and $(1-|x_\alpha|^2)^{-\frac{\beta}{2}}\geq (1-x_n^2)^{-\frac{\beta}{2}}$ for 
collections $x_\alpha$ that contain the $x_n$ variable. 
We have 
\begin{align*}
\int_{\SP^{n-1}} &\prod_{J=1}^{J_{\max}} f_J \, d\sigma \geq \int_{\SP^{n-1}} \prod_{J : (\alpha^J_1)_n=1} \left(\prod_{i=2}^N |x_{\alpha_i^J}|^{-\gamma\widetilde\alpha_i }\prod_{i : (R^J)_i = 1} |x_i|^{-\gamma} \right) \\
&\times
\prod_{J : (\alpha^J_1)_n=0, (R^J)_n=0} ( 1- |x_{\alpha^J_{\hat i}}|^2)^{-\frac{\gamma(n-\widetilde\alpha_{\hat i})}{2} }\prod_{J: (R^J)_n=1} (1-|x_n|^2)^{-\frac{\gamma(n-1)}{2}}d\sigma\\
&\geq \int_{\SP^{n-1}} \prod_{J : (\alpha^J_1)_n=1} (x_1^2+\dots+x_{n-1}^2)^{-\frac{\gamma\left(\sum_{i=2}^N\widetilde\alpha_i + \widetilde R\right)}{2}} \\
&\times
\prod_{J : (\alpha^J_1)_n=0, (R^J)_n=0} (1- x_n^2)^{-\frac{\gamma(n-\widetilde\alpha_{\hat i})}{2}} \prod_{J: (R^J)_n=1} (1-x_n^2)^{-\frac{\gamma(n-1)}{2}}d\sigma.
\end{align*}
Now we think of the sphere as a graph, noting that $1-x_n^2=x_1^2+\dots+x_{n-1}^2$, and pass to polar coordinates, obtaining 
\begin{align*}
\int_{\SP^{n-1}} &\prod_{J=1}^{J_{\max}} f_J \, d\sigma \geq \int_{B_{n-1}} \prod_{J : (\alpha^J_1)_n=1} (x_1^2+\dots+x_{n-1}^2)^{-\frac{\gamma\left(\sum_{i=2}^N\widetilde\alpha_i + \widetilde R\right)}{2}} \\
&\times
\prod_{J : (\alpha^J_1)_n=0, (R^J)_n=0}(x_1^2+\dots+x_{n-1}^2)^{-\frac{\gamma(n-\widetilde\alpha_{\hat i})}{2}} \\
&\times\prod_{J: (R^J)_n=1} (x_1^2+\dots+x_{n-1}^2)^{-\frac{\gamma(n-1)}{2}}(1-x_1^2-\dots-x_{n-1}^2)^{-1/2}{dx_1\dots dx_{n-1}}\\
&= \int_0^1\rho^{-\gamma\left[\left(\sum_{i=2}^N\widetilde\alpha_i + \widetilde R\right)\binom{n}{\widetilde\alpha_1-1, \dots, \widetilde\alpha_N, \widetilde R} + \sum_{i=2}^N (n-\widetilde\alpha_i)\binom{n-1}{\widetilde\alpha_1, \dots, \widetilde\alpha_i-1,\dots, \widetilde\alpha_N, \widetilde R}\right] }\\
&\times \rho^{-\gamma\left[\frac{\widetilde R (n-1)}{n}\binom{n}{\widetilde\alpha_1,\dots,\widetilde\alpha_N,\widetilde R}\right]} {(1-\rho^2)^{-1/2}}{\rho^{n-2}d\rho}.
\end{align*}
This integral diverges for 
\begin{align*}
\gamma &= (n-1) \left[ \left(\sum_{i=2}^N\widetilde\alpha_i + \widetilde R\right)\binom{n}{\widetilde\alpha_1-1, \dots, \widetilde\alpha_N, \widetilde R} 
\right. \\
& \left. + \sum_{i=2}^N (n-\widetilde\alpha_i)\binom{n-1}{\widetilde\alpha_1, \dots, \widetilde\alpha_i-1,\dots, \widetilde\alpha_N, \widetilde R} + \frac{\widetilde R (n-1)}{n}\binom{n}{\widetilde\alpha_1,\dots,\widetilde\alpha_N,\widetilde R}  \right]^{-1}.
\end{align*}
So, if we take $\gamma p <1$ to make the right-hand side finite, we find
\[
p < \frac{1}{\gamma} = \frac{(n-2)!}{\widetilde\alpha_1!\dots\widetilde\alpha_N!\widetilde R!}\left[\sum_{i=1}^N (n-\widetilde\alpha_i)\widetilde\alpha_i +(n-1)\widetilde R \right],
\]
which is exactly $\widetilde{p}$ in Theorem \ref{expsym}. 
\end{proof}

\begin{remark}\label{card}
There is a potential ambiguity in the ordering of the multi-indices in Theorem 
\ref{structure}, but the symmetries related to different orderings are equivalent, meaning that the Lie subalgebras of $\mathfrak{so}(n)$ given by Theorem \ref{structure} associated to the ambiguous cases are not only isomorphic, but exactly the same subalgebra. This 
also implies that in Theorems \ref{expsym} and \ref{sharp_exp} we might be over-counting the 
number of functions $J_{\max}$ and all the other related quantities. Indeed, there is a common 
factor multiplying all these quantities, due to the fact that we introduced the ordering in Theorem \ref{structure}. 
To identify this factor, let $A_j$, $j\in\N$ be the set of indices $\widetilde\alpha_i$ such that $\widetilde\alpha_i=j$, for $i=1,\dots,N$. 
By our construction, the only sets that can be nonempty are $A_j$ for $j=2,\dots,n-2$. 
We can run the proofs of Theorems \ref{expsym} and \ref{sharp_exp} with all the 
quantities divided by $\prod_{j=2}^{n-2}|A_j|!$ and get again sharp exponents.
\end{remark}

\section{Local Brascamp--Lieb inequalities}
As an application of the inequalities found in Section \ref{ineqSection} we derive some local Euclidean Brascamp--Lieb inequalities 
associated to orthogonal projections on vector subspaces of $\R^n$ generated by collections of vectors in the basis $\{e_1,\dots,e_n\}$, i.e. inequalities 
on $\R^n$ of the form 
\[
\int_{B(0,R)} \prod_{j=1}^m f_j(\pi_j x) dx \lesssim R^\delta \prod_{j=1}^m{\|f_j\|_{L^p(\R^{n_j})}},
\]
where $B(0,R)$ is the Euclidean ball of center $0$ and radius $R>0$ (this $R$ is not related to the $R$ appearing in the previous sections) and the power $\delta$ and the implicit constant depend on $n$, on the projections $\pi_j$ 
and on the exponent $p$. More precisely we only consider projections $\pi_\alpha : \R^n \to \R^{|\alpha|}$ mapping a 
point $(x_1,\dots,x_n)$ to $x_\alpha$ (see Section \ref{notation} for the notation). Note that, given a function $f : \R^{|\alpha|}\to \R^+$, 
the pullback function $f\circ \pi_\alpha : \R^n\to \R^+$ is a function that restricted to each sphere $r\SP^{n-1}$, with $r>0$, endowed 
with the normalized measure $r^{-(n-1)}d\sigma$, with $d\sigma$ as above, is annihilated by the algebra $\mathfrak{so}_{\bar\alpha}$. 
We thus can extend Definition \ref{symmetric} to functions defined on the whole space $\R^n$. Recall that for a maximal subset $\mathcal{A}$ of $\{L_{i,j}\}_{i,j}$, we have the decomposition 
\[
\langle \mathcal{A}\rangle  = \bigoplus_{i=1}^N\mathfrak{so}_{\alpha_i}
\]
given by Theorem \ref{structure}. A function $f$ on the whole space $\R^n$ which is $\mathcal{A}$-symmetric is a function that 
depends only on the variables $x_{\bar{\alpha}_1}$, { and on the radius $|x_{\alpha_1}|$. Hence, $\mathcal{A}$-symmetric functions on $\R^n$ are more general than those appearing in Euclidean Brascamp--Lieb inequalities associated to orthogonal projections. Anyway, in order to analyze the connection with the Euclidean case, for the sake of our analysis, we will only consider $\mathcal{A}$-symmetric functions on $\R^n$ that do not depend on the radius $|x_{\alpha_1}|$. Since they only depend on the variables $x_{\bar{\alpha}_1}$, they can be identified 
with a function $\widetilde{f}$ defined on $\R^{n-|\alpha_1|}$ and then pulled back via the projection $\pi_{\bar{\alpha}_1}$.} Notice that if we restrict a function $f$ of this kind to a sphere $r\SP^{n-1}$ of radius $r$, we have 
\[
(\widetilde{f}\circ\pi_{\bar{\alpha}_1})_{|_{r\SP^{n-1}}} = \widetilde{f} \circ (\pi_{\bar{\alpha}_1})_{|_{r\SP^{n-1}}}, 
\] 
and $\widetilde{f}$ now acts only on the ball $rB_{n-|\alpha_1|}$ of radius $r$ in $\R^{n-|\alpha_1|}$ and so we recover the interpretation 
of the previous section. {By a slight abuse of notation we will write $f(x_{\bar{\alpha}_1})$ for $\widetilde{f}(x_{\bar{\alpha}_1})$}.

The following theorem is a special case of the local Brascamp--Lieb inequalities introduced by Bennett, Carbery, Christ, and Tao and is an immediate consequence of their analysis (see \cite[Theorem 8.17]{BCCT08} and \cite[Theorem 2.2]{BCCT10}). Here we propose a different proof for our particular case, using a change of variables to spherical coordinates and applying Theorem \ref{BLtheor_1}.

\begin{theorem}\label{local_th}
Let $f_1,\dots,f_m$ functions on $\R^n$ and let each $f_J$ be $\mathcal{A}^J$-symmetric for some maximal subset $\mathcal{A}^J$ of 
$\{L_{i,j}\}_{i<j}$, for $J=1,\dots,m$, {each of the form $f_J(x_{\bar{\alpha}_1^J})$}. Then the inequality  
\begin{equation}\label{localBL}
\int_{B(0,R)} \prod_{J=1}^m f_J({x_{\bar{\alpha}_1^J}}) dx \lesssim R^{\widetilde\delta} \prod_{J=1}^m{\|f_J\|_{L^{\widetilde{p}_J}(\R^{n-|\alpha_1^J|})}}
\end{equation}
holds for $R>0$, with $\widetilde{p}_J$ as in Theorem \ref{BLtheor_1} and 
\[
\widetilde\delta= n-\sum_{J=1}^m\widetilde{p}_J^{-1}(n-|\alpha^J_1|)>0.
\] 
\end{theorem}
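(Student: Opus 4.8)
The plan is to foliate the ball $B(0,R)$ by the dilated spheres $r\SP^{n-1}$, $0<r<R$, to apply on each leaf the sharp spherical inequality of Theorem~\ref{BLtheor_1}, to convert the resulting spherical $L^{\tilde{p}_J}(\SP^{n-1})$ norms into Euclidean $L^{\tilde{p}_J}$ norms by means of the integration formula of Lemma~\ref{integration}, and finally to integrate the remaining power of $r$ over $(0,R)$. Since the $f_J$ are nonnegative, every interchange of integrations below is justified by Tonelli's theorem, and, by the discussion preceding the statement, we may harmlessly identify each $f_J$ on $\R^n$ with the function $\tilde f_J$ on $\R^{n-|\alpha_1^J|}$ it is pulled back from.

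First I would pass to polar coordinates $x=r\omega$, $r>0$, $\omega\in\SP^{n-1}$, to obtain
\[
\int_{B(0,R)}\prod_{J=1}^m f_J(x)\,dx\lesssim\int_0^R r^{n-1}\Big(\int_{\SP^{n-1}}\prod_{J=1}^m f_J(r\omega)\,d\sigma(\omega)\Big)dr .
\]
Because the vector fields $L_{i,j}$ are tangent to every sphere centered at the origin and generate the rotations, the extended $\mathcal{A}^J$-symmetry of $f_J$ means exactly that, for each fixed $r$, the function $\omega\mapsto f_J(r\omega)$ is $\mathcal{A}^J$-symmetric on $\SP^{n-1}$ and, under the identification of $\SP^{n-1}$ with $r\SP^{n-1}$, corresponds to $y\mapsto\tilde f_J(ry)$, $y\in B_{n-|\alpha_1^J|}$. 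Hence Theorem~\ref{BLtheor_1} applies to the inner integral and bounds it by $\prod_{J=1}^m\|f_J(r\,\cdot)\|_{L^{\tilde{p}_J}(\SP^{n-1})}$.

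Next I would estimate each spherical norm. The function $\omega\mapsto f_J(r\omega)^{\tilde{p}_J}$ depends only on the $n-|\alpha_1^J|$ variables $\omega_{\bar\alpha_1^J}$, i.e.\ it is $\mathfrak{so}_{\alpha_1^J}$-symmetric, so Lemma~\ref{integration} applied with $\alpha=\bar\alpha_1^J$ (note $|\alpha|=n-|\alpha_1^J|\le n-2$ since $|\alpha_1^J|\ge2$) gives
\[
\|f_J(r\,\cdot)\|_{L^{\tilde{p}_J}(\SP^{n-1})}^{\tilde{p}_J}\lesssim\int_{B_{n-|\alpha_1^J|}}\tilde f_J(ry)^{\tilde{p}_J}\,(1-|y|^2)^{\frac{|\alpha_1^J|-2}{2}}\,dy\le\int_{B_{n-|\alpha_1^J|}}\tilde f_J(ry)^{\tilde{p}_J}\,dy,
\]
using $(1-|y|^2)^{(|\alpha_1^J|-2)/2}\le1$ on the unit ball. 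A change of variables $z=ry$ followed by enlarging the domain to all of $\R^{n-|\alpha_1^J|}$ yields $\|f_J(r\,\cdot)\|_{L^{\tilde{p}_J}(\SP^{n-1})}\lesssim r^{-(n-|\alpha_1^J|)/\tilde{p}_J}\|f_J\|_{L^{\tilde{p}_J}(\R^{n-|\alpha_1^J|})}$. Plugging this back, the exponent of $r$ in the radial integral becomes $n-1-\sum_{J}\tilde{p}_J^{-1}(n-|\alpha_1^J|)=\tilde\delta-1$, so that
\[
\int_{B(0,R)}\prod_{J=1}^m f_J\,dx\lesssim\Big(\prod_{J=1}^m\|f_J\|_{L^{\tilde{p}_J}(\R^{n-|\alpha_1^J|})}\Big)\int_0^R r^{\tilde\delta-1}\,dr=\frac{R^{\tilde\delta}}{\tilde\delta}\prod_{J=1}^m\|f_J\|_{L^{\tilde{p}_J}(\R^{n-|\alpha_1^J|})},
\]
which is \eqref{localBL}, the factor $1/\tilde\delta$ being absorbed in the implicit constant.

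The step I expect to be the real obstacle — indeed the only point that is not mechanical — is proving the inequality $\tilde\delta>0$, equivalently $\sum_{J}\tilde{p}_J^{-1}(n-|\alpha_1^J|)<n$: this is exactly what makes $\int_0^R r^{\tilde\delta-1}\,dr$ converge at $r=0$, and it is a genuinely combinatorial statement relating the dimensions $n-|\alpha_1^J|$ to the exponents $\tilde{p}_J$ of Theorem~\ref{BLtheor_1}. In the balanced setting of Section~\ref{ineqSection}, where $\tilde{p}_J$ is given by \eqref{exponent} and $|\alpha_1^J|=\tilde\alpha_1$ for every $J$, one can compute $\tilde\delta$ in closed form and verify positivity directly from the ordering $\tilde\alpha_1\ge\tilde\alpha_2\ge\dots\ge\tilde\alpha_N$ together with $n-\tilde\alpha_1\ge\sum_{i\ge2}\tilde\alpha_i$; for general subsets $\mathcal{A}^J$ one must instead bound each $\tilde{p}_J$ from below directly from its combinatorial definition. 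Once $\tilde\delta>0$ is secured, the remaining work — Tonelli, the identification $f_J\leftrightarrow\tilde f_J$, and the elementary estimates above — is routine.
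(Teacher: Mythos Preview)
Your argument follows exactly the paper's route: pass to polar coordinates, apply Theorem~\ref{BLtheor_1} on each sphere $r\SP^{n-1}$, convert the resulting spherical norms into Euclidean $L^{\tilde p_J}$ norms, and integrate the remaining power of $r$; your explicit appeal to Lemma~\ref{integration} (and the bound $(1-|y|^2)^{(|\alpha_1^J|-2)/2}\le 1$) just spells out what the paper absorbs into the passage from \eqref{BLsphere} to \eqref{BLsphere_1}. The one point of divergence is the inequality $\tilde\delta>0$: you correctly isolate it as the only non-mechanical step but then leave the general case open, whereas the paper handles it in a single line, invoking the elementary facts $\tilde p_J\le m$ (a vector field can occur in at most $m$ of the $m$ complements) and $|\alpha_1^J|>0$.
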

\begin{proof}
Integrating the product $\prod_{J=1}^m f_J$ over the ball $B(0,R)$, passing to spherical coordinates and applying 
Theorem \ref{BLtheor_1}, we get
\begin{align*}
\int_{B(0,R)} \prod_{J=1}^m f_J(x) dx & = \int_0^R \int_{\SP^{n-1}} \prod_{J=1}^{m} f_J(\rho x') d\sigma\rho^{n-1}d\rho \\
&\lesssim \int_0^R \prod_{J=1}^{m} \| f_J(\rho\cdot)\|_{L^{\widetilde{p}_J}(B_{n-|\alpha^J_1|})}\rho^{n-1}d\rho \\
&=  \int_0^R \prod_{J=1}^{m} \| f_J \|_{L^{\widetilde{p}_J}(B_{n-|\alpha^J_1|}(0,\rho))}\rho^{-\sum (n-|\alpha^J_1|)\widetilde{p}_J^{-1}}\rho^{n-1}d\rho\\
&\lesssim \prod_{J=1}^{m} \| f_J \|_{L^{	\widetilde{p}_J}(\R^{n-|\alpha_1^J|})}\int_0^R\rho^{-\sum (n-|\alpha^J_1|)\widetilde{p}_J^{-1}+ n-1}d\rho.
\end{align*}
Observing that \[n-1 -\sum_{J=1}^m (n-|\alpha_1^J|)\widetilde{p}_J^{-1} \geq n-1 - m^{-1}\sum_{J=1}^m  n > -1,\] 
since $\widetilde{p}_J\leq\widetilde{p}\leq m$ and 
$|\alpha_1^J|>0$, integrating in $\rho$  we finally obtain 
\[
\int_{B(0,R)} \prod_{J=1}^{m} f_J(x) dx  \lesssim R^\delta \prod_{J=1}^{m} \| f_J \|_{L^{\widetilde{p}_J}(\R^{n-|\alpha_1^J|})},
\]
with $\widetilde\delta={n-\sum_{J=1}^m\widetilde{p}_J^{-1}(n-|\alpha^J_1|)}>0$. 
\end{proof}
\begin{remark}
By Theorem \ref{BLtheor_1}, inequality \eqref{localBL} also holds for all exponents $p_J\geq \widetilde{p}_J$ with the same argument 
as in Theorem \ref{local_th}. For $R>1$ this is also a consequence of the fact that 
$\sum_{J=1}^m (n-|\alpha_1^J|)p_J^{-1}\leq \sum_{J=1}^m(n-|\alpha_1^J|)\widetilde{p}_J^{-1}$
for $p_J\geq \widetilde{p}_J$. Indeed, when $R>1$, inequality \eqref{localBL} holds for all $\delta\geq\widetilde\delta$. 
\end{remark}
If we are in the balanced setting of Theorem \ref{expsym}, we also have the following result.
\begin{proposition}
With the hypotheses of Theorem \ref{expsym}, we have that inequality \eqref{localBL} holds with $\widetilde{p}_J=\widetilde p$ for all $J$, where $\widetilde p$ is given by formula $\eqref{exponent}$ and 
\[
\widetilde\delta=n-\widetilde{p}^{-1}(n-\widetilde{\alpha}_1)J_{\max}.
\]
Moreover $\widetilde\delta$ is sharp {in the sense that under our assumptions, inequality \eqref{localBL} is false in general for $\delta<\widetilde\delta$}. 
\end{proposition}
\begin{proof}
The expression for $\widetilde\delta$ follows by specializing Theorem \ref{local_th} to the setting of Theorem \ref{expsym}. 
In this setting, taking $\delta<\widetilde\delta$ is equivalent to choosing $p<\widetilde{p}$. Recalling that 
\[
\int_{B_n(0,R)} \prod_{J=1}^m f_J(x) dx =  \int_0^R \int_{\SP^{n-1}} \prod_{J=1}^{m} f_J(\rho x') d\sigma\rho^{n-1}d\rho
\]
and that by Theorem \ref{sharp_exp} for $p<\widetilde p$ there exist functions $f_J$, each $\mathcal{A}^J$-symmetric for $J=1,\dots,J_{\max}$, such that the 
inner integral diverges, we have that inequality \eqref{localBL} cannot hold for $\delta<\widetilde\delta$. 
\end{proof}

\footnotesize
\bibliographystyle{plain}
\bibliography{Bibliography}
\end{document}